\theoremstyle{plain}    
\newtheorem{thm}{Theorem}
\newtheorem{rmk}{Remark}
\newtheorem{fact}{Fact}
\newcommand{\ra}{\rightarrow}
\begin{document}
\title{On systems of rational difference equations and periodic tetrachotomies}
\author[Frank J. Palladino]{Frank J. Palladino}
\address{Department of Mathematics, University of Rhode Island,Kingston, RI 02881-0816, USA;}
\email{frank@math.uri.edu}
\date{May 1, 2010}
\subjclass{39A10,39A11}
\keywords{difference equation, periodic convergence, systems, periodic tetrachotomy}

\begin{abstract}
\noindent We study the following system of two rational
difference equations
$$x_{n}=\frac{\beta_{k}x_{n-k} +\gamma_{k}y_{n-k}}
{1+\sum_{j=1}^{\ell}B_{j}x_{n-j} + \sum_{j=1}^{\ell}C_{j}y_{n-j}},\quad n\in\mathbb{N},$$
$$y_{n}=\frac{\delta_{k}x_{n-k} +\epsilon_{k}y_{n-k}}
{1+\sum_{j=1}^{\ell}D_{j}x_{n-j} + \sum_{j=1}^{\ell}E_{j}y_{n-j}},\quad n\in\mathbb{N},$$
with nonnegative parameters and nonnegative initial conditions. We assume that $B_{j}=C_{j}=D_{j}=E_{j}=0$ for $j=k,\; 2k,\; 3k,\; \dots$ and establish the existence of periodic
tetrachotomy behavior which depends on a $2\times 2$ matrix with entries $\beta_{k}, \gamma_{k}, \delta_{k},$ and $\epsilon_{k}$.
\end{abstract}
\maketitle

\section{Introduction}
Recently, several papers discussing rational systems in the plane have appeared in the literature. We refer particularly to \cite{systri}, \cite{syspln}, and \cite{ladassystems1}. In \cite{systri}, the authors mention a conjecture regarding periodic trichotomy behavior for some rational systems in the plane.
Given this interest in developing bifurcation results in the setting of systems of two rational difference equations, we ask the following question. ``What is the natural generalization of the periodic trichotomy behavior when we move to the setting of systems of two rational difference equations? ''\newline
It turns out that for a certain family of periodic trichotomy results the natural generalization is a periodic tetrachotomy. We use the word ``tetrachotomy'' to indicate a four way split of qualitative behaviors. This four way split arises naturally due to the added dimension. In two dimensions, the non-hyperbolic case is split into two subcases.
The reason the non-hyperbolic case is split into two subcases is a straightforward consequence of the Perron-Frobenius theorem. A nonnegative $1\times 1$ matrix with spectral radius $1$ can only have $1$ as an eigenvalue. However a nonnegative $2\times 2$ matrix with spectral radius $1$ has a choice between $1$ and $-1$ for eigenvalues. How the cases are split and what role the eigenvalues play will be made clear later in the article.\newline 

\section{A Family of Periodic Trichotomies}
To understand the essence of how rational difference equations behave, it is vital to understand the interaction between delays in the numerator and delays in the denominator. Qualitatively, one can say that when the greatest common divisor of the delays in the numerator does not divide any of the delays in the denominator, then the numerator and denominator have little interaction.
To be more specific if this occurs, then there is a nontrivial subspace of initial conditions
where the solution behaves linearly. In \cite{f2} and \cite{f3}, the author shows that the rational difference equation inheirits trichotomy behavior from the associated linear difference equation in this case.\newline
To give a demonstration of this idea consider the most basic case, namely the rational difference equation where there is a single delay present in the numerator and every multiple of that delay is not present in the denominator.
In other words consider the rational difference equation
$$x_{n}=\frac{\beta_{k}x_{n-k}}{1+\sum^{\ell}_{j=1}B_{j}x_{n-j}},\quad n\in\mathbb{N},$$
where $B_{j}=0$ for $j=k,\; 2k,\; 3k,\; \dots$. In this case simply choose initial conditions so that if $n\not\equiv 0 \mod{k}$ then $x_{n}=0$. When initial conditions are chosen this way then induction guarantees that if $n\not\equiv 0 \mod{k}$ then $x_{n}=0$ for all $n\in\mathbb{N}$. So under this choice of initial conditions if $n\equiv 0 \mod{k}$ then 
$$x_{n}=\beta_{k}x_{n-k}.$$ From this it is already clear that when $\beta_{k}>1$ there exist unbounded solutions under an appropriate choice of initial conditions. When $\beta_{k}<1$ then the map is a contraction and clearly every solution converges to zero. When $\beta_{k}=1$ the subsequences $x_{mk+a}$ must be monotone. Since bounded monotone sequences converge every solution converges to a periodic solution of not necessarily prime period $k$. Also choosing initial conditions so that $x_{n}=1$ if $n\equiv 0 \mod{k}$ and $x_{n}=0$ if $n\not\equiv 0 \mod{k}$ gives a periodic solution of prime period $k$.\newline   
Our goal in this paper will be to create an analogue of this basic trichotomy case for systems of two rational difference equations. The added dimension makes the process significantly more difficult in the boundary case mainly because we no longer have the monotonicity, which we used in the one dimensional case. To get around this difficulty we must assume that the matrix, which describes the behavior on the invariant subspace where our equation acts linearly, is Hermitian. Under this assumption monotonicity is replaced by monotonicity in norm, at which point theorems 1 and 2 from \cite{f2} are applied. Using this approach the proof goes through in many cases. The remaining Hermitian cases are handled by another type of monotonicity argument. 
Thus we obtain a tetrachotomy result in the Hermitian cases. Extending such a result to the full range of parameters is more difficult since there are several non-symmetric cases where the monotonicity breaks down. In one of these cases we cannot use the standard inner product norm, as we do in the Hermitian cases, but we give another function which depends on our parameters. The solution is monotone with respect to this function and this allows the result to be shown. The last case uses monotonicity coupled with an argument involving the limit superior and limit inferior of subsequences of our solution. 

\section{A Representation Using Vector Spaces}

Consider the system of two rational
difference equations
$$x_{n}=\frac{\beta_{k}x_{n-k} +\gamma_{k}y_{n-k}}
{1+\sum_{j=1}^{\ell}B_{j}x_{n-j} + \sum_{j=1}^{\ell}C_{j}y_{n-j}},\quad n\in\mathbb{N},$$
$$y_{n}=\frac{\delta_{k}x_{n-k} +\epsilon_{k}y_{n-k}}
{1+\sum_{j=1}^{\ell}D_{j}x_{n-j} + \sum_{j=1}^{\ell}E_{j}y_{n-j}},\quad n\in\mathbb{N},$$
with nonnegative parameters and nonnegative initial conditions. Assume that $B_{j}=C_{j}=D_{j}=E_{j}=0$ for $j=k,\; 2k,\; 3k,\; \dots$.
We find that it is useful to rewrite our system using matrix notation. We let
$$v_{n}=\left(\begin{array}{cc}
x_{n}\\               
y_{n}\\ 
\end{array}
\right),\quad A=\left(\begin{array}{cc}
\beta_{k} & \gamma_{k} \\
\delta_{k} & \epsilon_{k} \\
\end{array}\right),$$  and $$B_{n}=\left(\begin{array}{cc}
\frac{1}{1+\sum_{j=1}^{\ell}a_{j}\cdot v_{n-j}} & 0 \\
0 & \frac{1}{1+\sum_{j=1}^{\ell}q_{j}\cdot v_{n-j}} \\
\end{array}\right),$$
where 
$$a_{j}=\left(\begin{array}{cc}
B_{j}\\               
C_{j}\\ 
\end{array}
\right)\quad and\quad q_{j}=\left(\begin{array}{cc}
D_{j}\\               
E_{j}\\ 
\end{array}
\right).$$
Our system then becomes
$$v_{n}= B_{n}Av_{n-k},\quad n\in\mathbb{N}.$$
In the next few sections we prove results for systems written in this form.
\section{The contraction case}
In the first theorem of this section we prove that when the spectral radius of $A$ is less than one then every solution converges to the zero equilibirum. 
This is the contraction case of our tetrachotomy.
\begin{thm}
Consider the recursive system on $[0,\infty)^{m}$
$$v_{n}=B_{n}Av_{n-k},\quad n\in\mathbb{N},$$
where $A=(a_{ij})$ is a real $m\times m$ matrix with nonnegative entries $a_{ij}\geq 0$ and with spectral radius less than $1$. Assume that initial conditions are in $[0,\infty)^{m}$. 
Further assume that $B_{n}$ is a real $m\times m$ diagonal matrix which may depend on $n$ and on prior terms of our solution $\{v_{n}\}$, with all entries $b_{n,ii}\in [0,1]$ for all $n\in\mathbb{N}$.
Then every solution converges to the $0$ vector.
\end{thm}
\begin{proof}
Consider the system 
$$u_{n}= Au_{n-k},\quad n\in\mathbb{N}.$$
Suppose $v_n=u_n$ for $n<1$. In other words suppose that the two systems have the same initial conditions. Then the $ith$ entry of the vector $v_{n}$ is less than or equal to the $ith$ entry of the vector $u_{n}$ for all $n\in \mathbb{N}$ and for all $i\in\{1,\dots ,m\}$, 
in other words $v_{n,i}\leq u_{n,i}$. We prove this by strong induction on $n$. The initial conditions provide the base case. Suppose the result holds for $n<N$. 
$$v_{N,i}= b_{N,ii}\sum^{m}_{j=1}a_{ij}v_{N-k,j}\leq \sum^{m}_{j=1}a_{ij}v_{N-k,j}\leq \sum^{m}_{j=1}a_{ij}u_{N-k,j} = u_{N,i},$$
since $b_{N,ii}\in [0,1]$ and $a_{ij}\geq 0$ for all $i,j\in\{1,\dots ,m\}$.
Thus we have shown $v_{n,i}\leq u_{n,i}$ for all $n\in\mathbb{N}$. \newline 
It is clear that $u_{kn+b}=A^{n}u_{b}$. Now if the spectral radius of $A$ is less than one it is a well known result that $\lim_{n\ra\infty}A^{n}=0$. Of course by $0$ here we mean the zero matrix. Thus, in this case,
$\lim_{n\ra\infty} u_{n} = \left(\begin{array}{ccc}
0\\
\vdots \\               
0\\ 
\end{array}
\right)$. Since $v_{n}\in [0,\infty)^{m}$ for all $n\in\mathbb{N}$, we have $\lim_{n\ra\infty} v_{n} = \left(\begin{array}{ccc}
0\\  
\vdots \\             
0\\  
\end{array}
\right)$.
\end{proof}

The next theorem is not used to establish the tetrachotomy result however it is another general boundedness and convergence result for systems of two rational difference equations. In some sense this result also relies on having small numerators, and so belongs in this section.
\begin{thm}
Consider the $k^{th}$ order system of two rational
difference equations
$$x_n=\frac{\alpha+\sum^{k}_{i=1}\beta_{i}x_{n-i} + \sum^{k}_{i=1}\gamma_{i}y_{n-i}}{1+\sum^{k}_{j=1}B_{j}x_{n-j} + \sum^{k}_{j=1}C_{j}y_{n-j}},\quad n\in\mathbb{N},$$
$$y_n=\frac{p+\sum^{k}_{i=1}\delta_{i}x_{n-i} + \sum^{k}_{i=1}\epsilon_{i}y_{n-i}}{1+\sum^{k}_{j=1}D_{j}x_{n-j} + \sum^{k}_{j=1}E_{j}y_{n-j}},\quad n\in\mathbb{N}.$$
In particular, we assume non-negative parameters and non-negative initial conditions. We also assume that $\sum^{k}_{j=1}D_{j}=\sum^{k}_{j=1}C_{j}$. 
Note that if both sums are zero then this is clearly true, if both sums are positive and this is not the case, then we may make a change of variables so that it is true. 
However this change of variables will alter the other parameters. We further assume that $\sum^{k}_{i=1}\beta_{i}+\gamma_{i}<1$, $\sum^{k}_{i=1}\delta_{i}+\epsilon_{i}<1$, $\sum^{k}_{i=1}\beta_{i}+\delta_{i}<1$, and $\sum^{k}_{i=1}\gamma_{i}+\epsilon_{i}<1$. Then every solution converges to a finite limit. 
\end{thm}
\begin{proof}
First we prove that every solution of the system is bounded. Notice that
$$x_n=\frac{\alpha+\sum^{k}_{i=1}\beta_{i}x_{n-i} + \sum^{k}_{i=1}\gamma_{i}y_{n-i}}{1+\sum^{k}_{j=1}B_{j}x_{n-j} + \sum^{k}_{j=1}C_{j}y_{n-j}}\leq \alpha+\sum^{k}_{i=1}\beta_{i}x_{n-i} + \sum^{k}_{i=1}\gamma_{i}y_{n-i}$$
$$\leq \alpha+\left(\sum^{k}_{i=1}\beta_{i}\right)\max_{i=1,\dots , k}{x_{n-i}} + \left(\sum^{k}_{i=1}\gamma_{i}\right)\max_{i=1,\dots , k}{y_{n-i}}$$
$$\leq \alpha + \left(\sum^{k}_{i=1}\beta_{i}+\gamma_{i}\right)\max{\left(\max_{i=1,\dots , k}{x_{n-i}},\max_{i=1,\dots , k}{y_{n-i}}\right)}. $$
Also we have
 $$y_n=\frac{p+\sum^{k}_{i=1}\delta_{i}x_{n-i} + \sum^{k}_{i=1}\epsilon_{i}y_{n-i}}{1+\sum^{k}_{j=1}D_{j}x_{n-j} + \sum^{k}_{j=1}E_{j}y_{n-j}}\leq p+\sum^{k}_{i=1}\delta_{i}x_{n-i} + \sum^{k}_{i=1}\epsilon_{i}y_{n-i}$$
$$\leq p+\left(\sum^{k}_{i=1}\delta_{i}\right)\max_{i=1,\dots , k}{x_{n-i}} + \left(\sum^{k}_{i=1}\epsilon_{i}\right)\max_{i=1,\dots , k}{y_{n-i}}$$
$$\leq p + \left(\sum^{k}_{i=1}\delta_{i}+\epsilon_{i}\right)\max{\left(\max_{i=1,\dots , k}{x_{n-i}},\max_{i=1,\dots , k}{y_{n-i}}\right)}.$$
Thus we get
$$\max{(x_{n},y_{n})}\leq \max{(\alpha,p)} + \max{\left(\left(\sum^{k}_{i=1}\beta_{i}+\gamma_{i}\right),\left(\sum^{k}_{i=1}\delta_{i}+\epsilon_{i}\right)\right)}\max_{i=1,\dots , k}{\left(\max{(x_{n-i},y_{n-i})}\right)}.$$
Renaming $z_{n}=\max{(x_{n},y_{n})}$, $b=\max{(\alpha,p)}$, and $C=\max{\left(\left(\sum^{k}_{i=1}\beta_{i}+\gamma_{i}\right),\left(\sum^{k}_{i=1}\delta_{i}+\epsilon_{i}\right)\right)}$, we get the difference inequality
$$z_{n}\leq b+C\max_{i=1,\dots , k}{z_{n-i}},\quad for\;all\;n\in\mathbb{N}.$$
Thus from Theorem 2 in \cite{f1} $\max_{i=1,\dots , k}{z_{n-i}}\leq \max{\left(u_{\left\lfloor \frac{n}{k}\right\rfloor},\dots , u_{n}\right)}$. Where $\{u_{n}\}^{\infty}_{n=1}$ is a solution of the difference equation
$$u_{n}=b+Cu_{n-1}.$$
Since $\sum^{k}_{i=1}\beta_{i}+\gamma_{i}<1$ and $\sum^{k}_{i=1}\delta_{i}+\epsilon_{i}<1$ every solution is bounded above also clearly every solution is bounded below by zero.\newline
Let $S_{1}=\limsup_{n\ra\infty}{x_{n}}$, $I_{1}=\liminf_{n\ra\infty}{x_{n}}$, $S_{2}=\limsup_{n\ra\infty}{y_{n}}$, and $I_{2}=\liminf_{n\ra\infty}{y_{n}}$. Then we have the following
$$S_{1}\leq \frac{\alpha+\left(\sum^{k}_{i=1}\beta_{i}\right)S_{1} + \left(\sum^{k}_{i=1}\gamma_{i} \right) S_{2}}{1+\left(\sum^{k}_{j=1}B_{j}\right) I_{1} + \left(\sum^{k}_{j=1}C_{j}\right) I_{2}} , $$
$$S_{2}\leq \frac{p+\left(\sum^{k}_{i=1}\delta_{i}\right)S_{1} + \left(\sum^{k}_{i=1}\epsilon_{i} \right) S_{2}}{1+\left(\sum^{k}_{j=1}D_{j}\right) I_{1} + \left(\sum^{k}_{j=1}E_{j}\right) I_{2}} , $$
$$I_{1}\geq \frac{\alpha+\left(\sum^{k}_{i=1}\beta_{i}\right)I_{1} + \left(\sum^{k}_{i=1}\gamma_{i} \right) I_{2}}{1+\left(\sum^{k}_{j=1}B_{j}\right) S_{1} + \left(\sum^{k}_{j=1}C_{j}\right) S_{2}} , $$
$$I_{2}\geq \frac{p+\left(\sum^{k}_{i=1}\delta_{i}\right)I_{1} + \left(\sum^{k}_{i=1}\epsilon_{i} \right) I_{2}}{1+\left(\sum^{k}_{j=1}D_{j}\right) S_{1} + \left(\sum^{k}_{j=1}E_{j}\right) S_{2}} . $$
Thus we get
$$\left(\sum^{k}_{j=1}B_{j}\right)I_{1}S_{1}-\alpha \leq \left(\left(\sum^{k}_{i=1}\beta_{i}\right) -1\right)S_{1} + \left(\sum^{k}_{i=1}\gamma_{i}\right)S_{2} - \left(\sum^{k}_{j=1}C_{j}\right)I_{2}S_{1} , $$
$$\left(\sum^{k}_{j=1}E_{j}\right)I_{2}S_{2}-p \leq \left(\left(\sum^{k}_{i=1}\epsilon_{i}\right) -1\right)S_{2} + \left(\sum^{k}_{i=1}\delta_{i}\right)S_{1} - \left(\sum^{k}_{j=1}D_{j}\right)I_{1}S_{2} , $$
$$\left(\sum^{k}_{j=1}B_{j}\right)I_{1}S_{1}-\alpha \geq \left(\left(\sum^{k}_{i=1}\beta_{i}\right) -1\right)I_{1} + \left(\sum^{k}_{i=1}\gamma_{i}\right)I_{2} - \left(\sum^{k}_{j=1}C_{j}\right)I_{1}S_{2} , $$
$$\left(\sum^{k}_{j=1}E_{j}\right)I_{2}S_{2}-p \geq \left(\left(\sum^{k}_{i=1}\epsilon_{i}\right) -1\right)I_{2} + \left(\sum^{k}_{i=1}\delta_{i}\right)I_{1} - \left(\sum^{k}_{j=1}D_{j}\right)I_{2}S_{1} . $$
This gives us
$$\left(\sum^{k}_{j=1}C_{j}\right)\left(I_{2}S_{1}-I_{1}S_{2}\right)\leq \left(\left(\sum^{k}_{i=1}\beta_{i}\right)-1\right)\left(S_{1}-I_{1}\right) + \left(\sum^{k}_{i=1}\gamma_{i}\right)\left(S_{2}-I_{2}\right),$$
$$\left(\sum^{k}_{j=1}D_{j}\right)\left(I_{1}S_{2}-I_{2}S_{1}\right)\leq \left(\left(\sum^{k}_{i=1}\epsilon_{i}\right)-1\right)\left(S_{2}-I_{2}\right) + \left(\sum^{k}_{i=1}\delta_{i}\right)\left(S_{1}-I_{1}\right).$$
We add the inequalities and since $\sum^{k}_{j=1}C_{j}=\sum^{k}_{j=1}D_{j}$ we get
$$0 \leq \left(\left(\sum^{k}_{i=1}\beta_{i}+\delta_{i}\right)-1\right)\left(S_{1}-I_{1}\right)+\left(\left(\sum^{k}_{i=1}\gamma_{i}+\epsilon_{i}\right)-1\right)\left(S_{2}-I_{2}\right).$$
Since $\sum^{k}_{i=1}\beta_{i}+\delta_{i}<1$, and $\sum^{k}_{i=1}\gamma_{i}+\epsilon_{i}<1$, $S_{1}=I_{1}$ and $S_{2}=I_{2}$. Thus every solution converges to a finite limit. 

\end{proof}

\section{The unbounded case}
In this section we handle the unbounded case. The unbounded case proceeds for systems in a similar way to the unbounded case for equations. We choose initial conditions so that the system acts linearly. This implies that whenever the associated linear system is unbounded our system is unbounded. 
\begin{thm}
Consider the recursive system on $[0,\infty)^{m}$
$$v_{n}=B_{n}Av_{n-k},\quad n\in\mathbb{N},$$
where $A=(a_{ij})$ is a real $m\times m$ matrix with nonnegative entries $a_{ij}\geq 0$ and with initial conditions in $[0,\infty)^{m}$. 
Further assume that $B_{n}$ is a real $m\times m$ diagonal matrix with entries $b_{n,ii}=\frac{1}{1+\sum^{\ell}_{j=1}q_{ij}\cdot v_{n-j}}$ for all $n\in\mathbb{N}$. Where the vectors $q_{ij}\in [0,\infty)^{m}$ and $q_{ij}=0$ for all $j=k,2k,3k,\dots$.
If either of the following hold:
\begin{enumerate}
\item The spectral radius of $A$ is greater than $1$.
\item The spectral radius of $A$ is equal to $1$ and $A$ has an eigenvalue $\lambda$ with $|\lambda|=1$ whose algebraic multiplicity exceeds its geometric multiplicity.
\end{enumerate}
Then for some choice of initial conditions the solution $\{v_{n}\}^{\infty}_{n=1}$ is such that  $\{||v_{n}||\}^{\infty}_{n=1}$ is an unbounded sequence.
\end{thm}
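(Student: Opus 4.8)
The plan is to exploit the freedom in the initial conditions so as to decouple the damping denominators from the growth, reducing the system to the purely linear recursion $v_n = Av_{n-k}$ along a single arithmetic progression of indices, after which Fact 2 finishes the argument. The crucial structural observation is that the numerator of $v_n$ depends only on $v_{n-k}$, whereas each denominator $1+\sum_{j=1}^{k-1}q_{ij}\cdot v_{n-j}$ involves only the terms $v_{n-1},\dots,v_{n-(k-1)}$, whose indices lie in residue classes modulo $k$ strictly different from that of $n$. Hence if the initial data is supported on a single residue class, the coupling through the denominators never switches on.

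Concretely, I would fix one residue class, say the class of the index $r_0$, and choose the $k$ initial conditions so that every initial vector whose index is not congruent to $r_0$ modulo $k$ is the zero vector, while the one active initial vector $v_{r_0}$ is a nonnegative vector chosen to satisfy $\langle v_{r_0},w_i\rangle\neq 0$ for every eigenvector $w_i$ of $A$. Such a nonnegative vector exists: the conditions $\langle v,w_i\rangle=0$ cut out finitely many hyperplanes through the origin, whose union meets the solid orthant $[0,\infty)^m$ only in a set of measure zero, so the complement of this union inside the orthant is nonempty.

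Next I would verify by strong induction on $n$ that the inactive classes remain identically zero while the active class evolves linearly. If the index $n$ is not congruent to $r_0$ modulo $k$, then $v_{n-k}$ lies in an inactive class and vanishes by the inductive hypothesis, so $Av_{n-k}=0$ and therefore $v_n=B_nAv_{n-k}=0$. If $n$ is congruent to $r_0$ modulo $k$, then the indices $n-1,\dots,n-(k-1)$ all lie in inactive classes, so every product $q_{ij}\cdot v_{n-j}$ vanishes, forcing $b_{n,ii}=1$ for each $i$ and hence $B_n=I$; thus $v_n=Av_{n-k}$. Iterating along the active class yields $v_{r_0+kn}=A^{n}v_{r_0}$ for all $n\geq 0$.

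Finally, since the spectral radius of $A$ exceeds $1$ and $v_{r_0}$ was chosen with nonzero projection onto each eigenvector, Fact 2 shows that $\{\langle A^{n}v_{r_0},A^{n}v_{r_0}\rangle\}$ is unbounded, that is $\|v_{r_0+kn}\|\ra\infty$, so $\{\|v_n\|\}$ is unbounded for this choice of initial conditions. The only real content is the decoupling observation of the first step; once one sees that supporting the data on a single residue class shuts off the denominators, the rest is a routine induction together with a direct appeal to Fact 2. I expect the only points needing slight care are confirming the existence of a nonnegative $v_{r_0}$ meeting the hypothesis of Fact 2, and noting that the case $k=1$ is trivial, since then the denominator sum is empty and the recursion is already linear.
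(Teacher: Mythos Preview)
Your proposal is correct and follows essentially the same route as the paper: support the initial data on a single residue class modulo $k$ so that the denominators collapse to $1$, reduce to the linear recursion $v_n=Av_{n-k}$ along that class, and invoke Fact~2 with a nonnegative initial vector having nonzero projection onto every eigenvector. Your write-up is in fact a bit more careful than the paper's, giving the measure-zero argument for the existence of such a nonnegative vector and explicitly treating the trivial case $k=1$.
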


\begin{proof}
Before we begin to prove the first case notice that if we choose initial conditions so that $v_{n}=0$ for $n<1$ and $n\neq 1-k$, then it is clear by a simple induction argument that $v_{n}=0$ for $n\not\equiv 1\mod k$. Thus for solutions with these initial conditions we have $v_{n}=Av_{n-k}$. 
We intend to take advantage of this linearity so we will assume that $v_{n}=0$ for $n<1$ and $n\neq 1-k$, and our goal in both cases will be to choose $v_{1-k}$ appropriately in order to create an unbounded solution.\newline
If we choose $v_{1-k}\in [0,\infty )^{m}$ so that for all the generalized eigenvectors of $A$, $w_{1},\dots w_{m}$, $\langle v_{1-k}, w_{i}\rangle \neq 0$ for all $i\in\{1,\dots ,m\}$, this is certainly possible since $[0,\infty)^{m}$ is an m-dimensional subspace of $\mathbb{R}^{m}$.  
Now in case (1) we notice that $||v_{kL+1}||= ||A^{L+1}v_{1-k}||= \sqrt{\langle A^{L+1}v_{1-k},A^{L+1}v_{1-k} \rangle}$, thus $\{||v_{kL+1}||\}^{\infty}_{L=1}$ is unbounded, so $\{||v_{n}||\}^{\infty}_{n=1}$ is unbounded.
Now in case (2) we notice that $||v_{kL+1}||= ||A^{L+1}v_{1-k}||= \sqrt{\langle A^{L+1}v_{1-k},A^{L+1}v_{1-k} \rangle}$, thus $\{||v_{kL+1}||\}^{\infty}_{L=1}$ is unbounded, so $\{||v_{n}||\}^{\infty}_{n=1}$ is unbounded.

\end{proof}

\section{The Hermitian case}
In this section we use the Perron-Frobenius theorem along with our work in the last 2 sections to demonstrate a general periodic trichotomy result.
For more details regarding the Perron-Frobenius theorem see \cite{perronfrob} chapter $8$ sections $2$ and $3$.
Recall that if we have a symmetric matrix with real coefficients then such a matrix must be Hermitian. Any such matrix $A$ is diagonalizable and has decomposition $UDU^{*}$ where $D$ is a diagonal matrix consisting of the eigenvalues of $A$, $U$ is a unitary matrix, and $U^{*}$ represents the conjugate transpose of $U$. Furthermore we know that $D$ has only real entries. The following fact will be useful.
\begin{fact}
Suppose we have a real symmetric $m\times m$ matrix $A$ whose spectral radius is $1$ then $\langle Av,Av\rangle\leq \langle v,v\rangle$ for all $v\in \mathbb{R}^{m}$. 
Moreover $\langle Av,Av\rangle = \langle v,v\rangle$ if and only if $v$ is in the span of the eigenvectors of $A$ with corresponding eigenvalues whose absolute value is $1$. 
\end{fact}
\begin{thm}
Consider the recursive system on $[0,\infty)^{m}$
$$v_{n}=B_{n}Av_{n-k},\quad n\in\mathbb{N},$$
where $A=(a_{ij})$ is a real symmetric $m\times m$ matrix with positive entries $a_{ij}> 0$ and with initial conditions in $[0,\infty)^{m}$. 
Further assume that $B_{n}$ is a real $m\times m$ diagonal matrix with entries $b_{n,ii}=\frac{1}{1+\sum^{\ell}_{j=1}q_{ij}\cdot v_{n-j}}$ for all $n\in\mathbb{N}$. Where the vectors $q_{ij}\in [0,\infty)^{m}$ and $q_{ij}=0$ for all $j=k,2k,3k,\dots$.
Then this system displays the following trichotomy behavior:
\begin{enumerate}[i]
\item If the spectral radius of $A$ is less than $1$ then every solution converges to the zero equilibrium.
\item If the spectral radius of $A$ is equal to $1$ then every solution converges to a solution of not necessarily prime period $k$. Furthermore in this case there exist solutions of prime period $k$.
\item If the spectral radius of $A$ is greater than $1$ then for some choice of initial conditions the solution $\{v_{n}\}^{\infty}_{n=1}$ has the property that $\{||v_{n}||\}^{\infty}_{n=1}$ is an unbounded sequence. Moreover, if we consider the sequences consisting of the entries of $v_{n}$, $\{v_{n,i}\}^{\infty}_{n=1}$, then $\{v_{n,i}\}^{\infty}_{n=1}$ is an unbounded sequence for every $i\in \{1,\dots , m\}$.
\end{enumerate}
\end{thm}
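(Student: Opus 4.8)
The plan is to dispatch the three cases by feeding Theorems 1 and 2 into the spectral structure supplied by the Perron--Frobenius theorem. Note first that every diagonal entry $b_{n,ii}=\frac{1}{1+\sum_{j=1}^{k-1}q_{ij}\cdot v_{n-j}}$ lies in $[0,1]$, since the $q_{ij}$ and the iterates $v_{n-j}$ are nonnegative; thus the hypotheses of both Theorem 1 and Theorem 2 hold. Because $A$ is symmetric with strictly positive entries, Perron--Frobenius guarantees that $\rho(A)$ is a simple eigenvalue with a strictly positive eigenvector and that it strictly dominates all other eigenvalues in modulus; fixing an orthonormal eigenbasis $w_1,\dots,w_m$ with $w_1>0$ and real eigenvalues $\lambda_1=\rho(A)>|\lambda_2|\geq\cdots\geq|\lambda_m|$ will be the backbone of the argument. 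Case (i) is then immediate: $\rho(A)<1$ is precisely the hypothesis of Theorem 1(I), so every solution tends to $0$.

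For case (iii) I would begin from the solution produced by Theorem 2, obtained from initial data $v_n=0$ for $n<1$, $n\neq 1-k$, with $v_{1-k}\geq 0$ chosen so that $\langle v_{1-k},w_i\rangle\neq 0$ for all $i$; for such data the recurrence collapses to $v_{kL+1}=A^{L+1}v_{1-k}$. Expanding in the eigenbasis, $v_{kL+1}=\sum_i\lambda_i^{L+1}\langle v_{1-k},w_i\rangle w_i$, whose dominant term is $\lambda_1^{L+1}\langle v_{1-k},w_1\rangle w_1$. Since $\lambda_1>1$, since $\langle v_{1-k},w_1\rangle>0$ (because $v_{1-k}\geq 0$ and $w_1>0$), and since every entry $(w_1)_i>0$, each coordinate $(v_{kL+1})_i\to+\infty$; hence every coordinate sequence $\{v_{n,i}\}$ is unbounded, which in particular recovers the unboundedness of $\{\|v_n\|\}$.

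The substance of the theorem is case (ii), $\rho(A)=1$. Theorem 1(II) gives $\|v_n-Av_{n-k}\|\to 0$, and the inequality $h(v_n)\leq h(v_{n-kL})$ extracted inside its proof shows that $\{\|v_n\|^2\}$ is nonincreasing along progressions of step $kL$, whence $\{v_n\}$ is bounded. As $\lambda_1=1$ is simple and strictly dominant, $A^n\to P:=w_1w_1^{\top}$, so $A^{n+1}-A^n\to 0$; with $m=1$ the Lemma of Section 3 then yields $\|v_n-v_{n-k}\|\to 0$. To convert this asymptotic periodicity into genuine convergence I would decompose $v_n=c_nw_1+r_n$ with $c_n=\langle v_n,w_1\rangle$ and $r_n=(I-P)v_n$. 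On the invariant subspace $w_1^{\perp}$ the operator $A$ contracts by the factor $|\lambda_2|<1$, and $\|r_n-Ar_{n-k}\|\leq\|v_n-Av_{n-k}\|\to 0$, so a $\limsup$ argument forces $r_n\to 0$. For the Perron component, pairing $v_n=B_nAv_{n-k}$ with $w_1$ and using $b_{n,ii}\leq 1$, $Aw_1=w_1$, and nonnegativity gives $c_n=\sum_i b_{n,ii}(Av_{n-k})_i(w_1)_i\leq\langle Av_{n-k},w_1\rangle=c_{n-k}$; thus $\{c_n\}$ is nonincreasing along each residue class mod $k$ and bounded below by $0$, hence $c_{nk+a}\to L_a$. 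Consequently $v_{nk+a}\to L_aw_1$, and passing to the limit in the recurrence along each residue class shows that the period-$k$ sequence $\psi$ defined by $\psi_a=L_aw_1$ is itself a solution of the system, which is the limit claimed.

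It remains to produce a prime period-$k$ solution. Taking $v_{1-k}=w_1$ and $v_n=0$ at the other initial indices, all denominators encountered equal $1$ exactly as in Theorem 2, so $v_{kL+1}=A^{L+1}w_1=w_1$ while $v_n=0$ for $n\not\equiv 1\pmod k$; with a single nonzero value per block of $k$ consecutive terms this solution has prime period $k$. The step I expect to be the main obstacle is the convergence in case (ii): $\|v_n-v_{n-k}\|\to 0$ does not by itself force convergence, and the decisive extra ingredient is the monotonicity $c_n\leq c_{n-k}$ of the Perron component, which holds precisely because $B_n$ acts as a coordinatewise contraction.
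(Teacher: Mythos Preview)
Your argument is correct, and cases (i), (iii), and the construction of a prime period-$k$ solution match the paper's treatment closely (your eigen-expansion in (iii) is a bit more explicit than the paper's ``one coordinate blows up, then positivity of $A$ spreads it to all coordinates'' step, but the idea is the same). The real divergence is in case (ii). The paper proceeds by a compactness argument: it shows that for $L$ large enough the sequences $h(v_{nkL+a})$ and $h(A^{L}v_{nkL+a})$ are monotone and squeeze to the same limit $\mathfrak{L}_a$, so any subsequential limit $w_a$ of $\{v_{nkL+a}\}$ satisfies $h(w_a)=h(A^{L}w_a)$; Fact~1 then forces $w_a\in\mathrm{span}(w_1)$, and nonnegativity together with $h(w_a)=\mathfrak{L}_a$ pins down $w_a$ uniquely, giving convergence of $\{v_{nkL+a}\}$. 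Lemma~1 is then genuinely needed to pass from period $kL$ to period $k$. Your route instead splits $v_n=c_nw_1+r_n$ and handles the pieces directly: $r_n\to 0$ by the spectral gap plus $\|v_n-Av_{n-k}\|\to 0$, and $c_n\le c_{n-k}$ from the entrywise contraction $B_n\le I$ paired against the positive Perron vector. This is more elementary and in fact makes your invocation of Lemma~1 superfluous---once $r_n\to 0$ and $c_{nk+a}\downarrow L_a$ you already have $v_{nk+a}\to L_aw_1$ without ever passing through period $kL$. The paper's subsequence approach, on the other hand, is more robust in situations where one cannot read off a monotone scalar quantity so cleanly.
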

\begin{proof}
First notice that (i) follows immediately from Theorem 1. Now consider case (iii). From Theorem 3 we get immediately that there is some choice of initial conditions so that the solution $\{v_{n}\}^{\infty}_{n=1}$ has the property that $\{||v_{n}||\}^{\infty}_{n=1}$ is an unbounded sequence.
Recall from the proof of Theorem 3 that every unbounded solution we constructed had the property that $v_{n}=0$ for $n<1$ and $n\neq 1-k$. For our purposes we will choose an unbounded solution which has this property, thus $v_{n}=Av_{n-k}$ for our solution. Since $\{||v_{n}||\}^{\infty}_{n=1}$ is an unbounded sequence it follows as a consequence 
$\{v_{n,i_{1}}\}^{\infty}_{n=1}$ is an unbounded sequence for some $i_{1}\in \{1,\dots , m\}$. So there is a subsequence $\{v_{n_{L},i_{1}}\}$ which diverges to $\infty$. Since $A=(a_{ij})$ is a real $m\times m$ matrix with positive entries $a_{ij}> 0$ and $v_{n_{L}+k}=Av_{n_{L}}$, the subsequence
 $\{v_{n_{L}+k,i}\}$ diverges to $\infty$ for all $i\in \{1,\dots , m\}$. So $\{v_{n,i}\}^{\infty}_{n=1}$ is an unbounded sequence for all $i\in \{1,\dots , m\}$. This concludes the proof of case (iii).\newline
To prove case (ii) we use the Perron-Frobenius theorem. The Perron-Frobenius theorem tells us that if $A=(a_{ij})$ is a real $m\times m$ matrix with positive entries $a_{ij}> 0$, then there is a positive real number $r$ called the Perron-Frobenius eigenvalue such that $r$ is an eigenvalue of $A$ and so that any other possibly complex eigenvalue $\lambda$ has $|\lambda|<r$. Moreover $r$ is a simple root of the characteristic polynomial and there is an eigenvector $w_{r}$ associated with $r$ having strictly positive components.
Now combining this with the fact that the spectral radius is $1$ we get that $r=1$ and every other eigenvalue $\lambda$ has $|\lambda|<1$.
Also we know that $r$ is a simple root of the characteristic polynomial so $r$ has algebraic multiplicity equal to $1$. So it must be true that every eigenvalue $\lambda$ with $|\lambda|=1$ has algebraic multiplicity equal to geometric multiplicity. Thus Fact 1 applies in this case.\newline
Since $0\leq b_{n,ii}=\frac{1}{1+\sum^{\ell}_{j=1}q_{ij}\cdot v_{n-j}}\leq 1$ for all $i\in \{1,\dots , m\}$ we have $||v_{n}||\leq ||Av_{n-k}||$.
Fact 1 gives us $||Av||\leq ||v||$ for all $v\in\mathbb{R}^{m}$.
Thus $||v_{n}||\leq ||Av_{n-k}||\leq ||v_{n-k}||$. Since each of the subsequences $\{||v_{nk+a}||\}^{\infty}_{n=1}$ are monotone decreasing and bounded below by zero, they all converge. 
So $\lim_{n\ra\infty}||v_{n}||-||v_{n-k}||=0$. By the squeeze theorem we get $\lim_{n\ra\infty}||v_{n}||-||Av_{n-k}||=0$.\newline
So the subsequences $\{||v_{nk+a}||\}^{\infty}_{n=1}$ and $\{||Av_{nk+a}||\}^{\infty}_{n=1}$ are convergent and since $\lim_{n\ra\infty}||v_{n}||-||Av_{n-k}||=0$  we get
$$\lim_{n\ra\infty}||v_{nk+a}||=\mathfrak{L}_a=\lim_{n\ra\infty}||Av_{nk+a}||.$$
Now consider the sequence $\{v_{nk+a}\}^{\infty}_{n=1}$ and let $\{v_{n_{j}k+a}\}^{\infty}_{j=1}$ be a convergent subsequence with $\lim_{j\ra\infty}v_{n_{j}k+a}=w_a$. By what we have just shown it must be true that
$||w_a||=||Aw_a||$, but then by Fact 1 we have that $w_a$ is in the span of the eigenvectors of $A$ with corresponding eigenvalues whose absolute value is $1$. Recall from the Perron-Frobenius theorem that there is only one such eigenvector and it is $w_{1}$, the eigenvector associated to the eigenvalue $1$.
So $w_a=cw_{1}$, where $c$ is an arbitrary constant, and $||w_a||=\mathfrak{L}_a$, also $w_a\in [0,\infty)^{m}$ as a consequence of our choice of initial conditions. Thus $w_a=w_{1} \left(\frac{\mathfrak{L}_a}{||w_{1}||}\right)$. What this means is that the sequence $\{v_{nk+a}\}^{\infty}_{n=1}$
must converge to $w_a=w_{1} \left(\frac{\mathfrak{L}_a}{||w_{1}||}\right)$. Suppose it does not, then for some $\epsilon >0$ there is a subsequence $\{v_{n_{d}k+a}\}^{\infty}_{d=1}$ so that 
$$\left|\left|v_{n_{d}k+a} - w_{1} \left(\frac{\mathfrak{L}_a}{||w_{1}||}\right)\right|\right|>\epsilon$$ for all $d\in\mathbb{N}$. However we know that $\{v_{n_{d}k+a}\}^{\infty}_{d=1}$ is bounded and so it has a convergent subsequence. This means that 
$\{v_{nk+a}\}^{\infty}_{n=1}$ has a convergent subsequence which does not converge to $w_{1} \left(\frac{\mathfrak{L}_a}{||w_{1}||}\right)$. We have already shown that every convergent subsequence of $\{v_{nk+a}\}^{\infty}_{n=1}$ converges to
$w_{1} \left(\frac{\mathfrak{L}_a}{||w_{1}||}\right)$. Thus we have a contradiction. This proves that the sequence $\{v_{nk+a}\}^{\infty}_{n=1}$
must converge to $w_{1} \left(\frac{\mathfrak{L}_a}{||w_{1}||}\right)$.\newline

Thus every solution must converge to a periodic solution of not necessarily prime period $k$. To construct a solution which is periodic with prime period $k$ we use our eigenvector $w_{1}$ associated with the eigenvalue $1$ having strictly positive components.
We choose initial conditions so that for $n>1$ if $n\not\equiv 0 \mod{k}$ then $v_{n}=0$ and if $n\equiv 0 \mod{k}$ then $v_{n}=w_{1}$. This is a periodic solution of prime period $k$. This concludes our proof.
\end{proof}

\begin{rmk}
Consider the recursive system on $[0,\infty)^{m}$
$$v_{n}=B_{n}Av_{n-k},\quad n\in\mathbb{N},$$
where $A=(a_{ij})$ is a real symmetric $m\times m$ matrix with non-negative entries $a_{ij}\geq 0$ and with spectral radius $1$. Assume initial conditions are in $[0,\infty)^{m}$. 
Further assume that $B_{n}$ is a real $m\times m$ diagonal matrix with entries $b_{n,ii}=\frac{1}{1+\sum^{\ell}_{j=1}q_{ij}\cdot v_{n-j}}$ for all $n\in\mathbb{N}$. Where the vectors $q_{ij}\in [0,\infty)^{m}$ and $q_{ij}=0$ for all $j=k,2k,3k,\dots$.
Further suppose that $A$ has a single eigenvector $w_{1}$ with eigenvalue $1$ and every other eigenvector $w_{i}$ has eigenvalue $\lambda_{i}$ with $|\lambda_{i}|<1$. Then every solution converges to a solution of not necessarily prime period $k$. Furthermore in this case there exist solutions of prime period $k$.
\end{rmk}
\begin{proof}
Identical to the last part of the proof above. 
\end{proof}

\section{A Periodic Tetrachotomy Result}
Now we combine all of our work to give some preliminary examples of periodic tetrachotomy behavior for systems of two rational difference equations.
 
\begin{thm}
Consider the system of two rational
difference equations
$$x_{n}=\frac{\beta_{k}x_{n-k} +\gamma_{k}y_{n-k}}
{1+\sum_{j=1}^{\ell}B_{j}x_{n-j} + \sum_{j=1}^{\ell}C_{j}y_{n-j}},\quad n\in\mathbb{N},$$
$$y_{n}=\frac{\delta_{k}x_{n-k} +\epsilon_{k}y_{n-k}}
{1+\sum_{j=1}^{\ell}D_{j}x_{n-j} + \sum_{j=1}^{\ell}E_{j}y_{n-j}},\quad n\in\mathbb{N},$$
with nonnegative parameters and nonnegative initial conditions. Assume $B_{j}=C_{j}=D_{j}=E_{j}=0$ for $j=k,\; 2k,\; 3k,\; \dots$. Define a matrix
$$A=\left(\begin{array}{cc}
\beta_{k} & \gamma_{k} \\
\delta_{k} & \epsilon_{k} \\
\end{array}\right).$$ 
This system exhibits the following tetrachotomy behavior.
\begin{enumerate}[I]
\item Suppose the spectral radius of $A$ is less than $1$, then every solution converges to the zero equilibrium.
\item Suppose the spectral radius of $A$ is equal to $1$, every eigenvalue $\lambda$ with $|\lambda|=1$ has algebraic multiplicity equal to its geometric multiplicity, and $-1$ is not an eigenvalue of $A$, then every solution converges to a periodic solution of not necessarily prime period $k$. Furthermore in this case there exist periodic solutions with prime period $k$.
\item Suppose the spectral radius of $A$ is equal to $1$, every eigenvalue $\lambda$ with $|\lambda|=1$ has algebraic multiplicity equal to its geometric multiplicity, and $-1$ is an eigenvalue of $A$, then every solution converges to a periodic solution of not necessarily prime period $2k$. Furthermore in this case there exist periodic solutions with prime period $2k$.
\item Suppose the spectral radius of $A$ is greater than $1$ or the spectral radius of $A$ is equal to $1$ and $A$ has an eigenvalue $\lambda$ with $|\lambda|=1$ whose algebraic multiplicity exceeds its geometric multiplicity. Then there exist solutions where $x_{n}+y_{n}$ is unbounded.
\end{enumerate}
\end{thm}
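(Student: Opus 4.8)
The plan is to read off the rational system in the vector form $v_n=B_nAv_{n-k}$ from the representation section, noting that the hypothesis $\delta_k=\gamma_k$ is exactly what makes $A$ symmetric with nonnegative entries, so that Theorems 1 and 2 apply directly. Cases I and IV are then immediate. For Case I, Theorem 1 gives $v_n\to 0$; since $(0,0)$ is an equilibrium and every equilibrium is a constant solution that must also converge to $0$, the vector $0$ is the unique equilibrium. For Case IV, Theorem 2 supplies initial conditions with $\{\|v_n\|\}$ unbounded, and because $x_n,y_n\geq 0$ we have $x_n+y_n\geq\sqrt{x_n^2+y_n^2}=\|v_n\|$, so $x_n+y_n$ is unbounded.

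The substance is in Cases II and III, and the organizing idea is to first pin down the eigenstructure of $A$. As $A$ is a nonnegative matrix, the Perron--Frobenius theorem guarantees that its spectral radius $1$ is itself an eigenvalue with a nonnegative eigenvector, and as $A$ is symmetric it is orthogonally diagonalizable with real eigenvalues. In Case II ($-1$ not an eigenvalue) the only unit-modulus eigenvalue is $+1$, every other eigenvalue satisfies $|\lambda|<1$, and the eigenspace $E_1$ admits an orthonormal basis $\{w^{(i)}\}$ of nonnegative vectors (the Perron eigenvector when $1$ is simple; the standard basis $e_1,e_2$ in the degenerate case $A=I$). Theorem 1 gives $\lim_n\|v_n-Av_{n-k}\|=0$, and I would prove convergence in two pieces. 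Writing $P$ for the orthogonal projection onto $E_1$ and $S=A|_{E_1^\perp}$, applying $I-P$ to $\|v_n-Av_{n-k}\|\to 0$ yields $\|(I-P)v_n-S(I-P)v_{n-k}\|\to 0$ with $S$ a strict contraction ($\|S\|=\max_{\lambda\neq 1}|\lambda|<1$); iterating this over a window of length $L$ and using boundedness of $\{v_n\}$ (from the comparison $0\leq v_{n,i}\leq u_{n,i}$ with $u_n=Au_{n-k}$ and $\|A^n\|\leq 1$) forces $(I-P)v_n\to 0$. For the $E_1$-component the key is monotonicity: from $v_n=B_nAv_{n-k}$ with $B_n$ diagonal, entries in $[0,1]$, together with $Aw^{(i)}=w^{(i)}\geq 0$, one gets $\langle v_n,w^{(i)}\rangle\leq\langle v_{n-k},w^{(i)}\rangle$, so each such inner product is nonincreasing along residues mod $k$ and converges. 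Hence $Pv_{nk+a}$ converges, and combined with $(I-P)v_n\to 0$ the whole subsequence $v_{nk+a}$ converges; the limiting period-$k$ sequence is a solution because $B_n$ depends continuously on the now-convergent lagged terms. A prime-period-$k$ solution is produced as in Theorems 2 and 3 by the zero-pattern data $v_{1-k}=w_1$ and $v_n=0$ otherwise, which linearizes the recursion to $v_n=Av_{n-k}$ and cycles through $w_1,0,\dots,0$.

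Case III is where symmetry and nonnegativity interact most rigidly. Combining $\rho(A)=1$ with the Perron--Frobenius fact that $1$ is an eigenvalue and with the hypothesis that $-1$ is an eigenvalue forces the two eigenvalues to be exactly $\{1,-1\}$; symmetry with nonnegative entries then gives trace $0$ and determinant $-1$, which pins $A$ down to the swap matrix $A=\left(\begin{array}{cc} 0 & 1 \\ 1 & 0\end{array}\right)$, i.e.\ $\beta_k=\epsilon_k=0$ and $\gamma_k=\delta_k=1$. Thus the system reads $x_n=b_{n,11}y_{n-k}$ and $y_n=b_{n,22}x_{n-k}$ with $b_{n,ii}\in(0,1]$, giving the chains $x_n\leq y_{n-k}\leq x_{n-2k}$ and $y_n\leq x_{n-k}\leq y_{n-2k}$. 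Hence $\{x_{2kn+a}\}$ and $\{y_{2kn+a}\}$ are nonincreasing and bounded below, every residue class mod $2k$ converges, and the solution approaches a period-$2k$ solution. For a prime-period-$2k$ solution I would again use zero-pattern data, now with $v_{1-k}=(1,0)$: since $(1,0)$ has a nonzero component along the eigenvector $(1,-1)$ of $-1$, the linearized orbit alternates $(1,0),(0,1),(1,0),\dots$ on its support, of prime period $2k$.

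The \emph{main obstacle} is the convergence statement in Case II. In the positive-entries setting of Theorem 3, Perron--Frobenius makes the top eigenvector unique and strictly positive, so convergence of the squared norm together with Fact 1 pins each subsequential limit to a single ray. With only nonnegative entries this uniqueness can fail (most visibly when $A=I$, where $E_1$ is two-dimensional), so norm convergence alone no longer determines the limiting vector. The fix is the monotonicity of the coordinates $\langle v_n,w^{(i)}\rangle$ relative to a nonnegative orthonormal eigenbasis of $E_1$, which replaces the Fact 1 argument and yields convergence of the actual vectors; the two points needing care are verifying that such a nonnegative orthonormal basis always exists and that the contraction estimate on $E_1^\perp$ is uniform in $n$.
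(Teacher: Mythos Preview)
Your treatment of Cases I, III, and IV is essentially the same as the paper's. In particular, for Case III you correctly observe that the hypotheses force the eigenvalues to be exactly $\{1,-1\}$, hence $\beta_k=\epsilon_k=0$ and (by $\gamma_k=\delta_k$) $\gamma_k=1$, so $A$ is the swap matrix and $x_n\le x_{n-2k}$, $y_n\le y_{n-2k}$; this is precisely the paper's argument (the paper writes $A=\bigl(\begin{smallmatrix}0&\gamma_k\\1/\gamma_k&0\end{smallmatrix}\bigr)$, which under $\gamma_k=\delta_k$ collapses to $\gamma_k=1$ anyway). Your construction of prime-period solutions via the zero-pattern initial data is also the same device the paper uses.

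The genuine difference is in Case II. The paper proceeds by the norm functional $h(v)=\langle v,v\rangle$: it shows $h(v_n)\le h(Av_{n-k})\le h(v_{n-k})$, so $h(v_{nk+a})$ converges, and then in the sub-case where one eigenvalue has $|\lambda|<1$ it invokes Fact~1 to force every subsequential limit onto the single Perron ray $\mathbb{R}_{\ge 0}w_1$; the degenerate sub-case $\gamma_k=\delta_k=0$ (diagonal $A$, including $A=I$) is handled separately by the bare inequalities $x_n\le x_{n-k}$, $y_n\le\lambda y_{n-k}$. You instead split $\mathbb{R}^2=E_1\oplus E_1^{\perp}$, kill the $E_1^{\perp}$-component via the strict contraction $S=A|_{E_1^{\perp}}$ together with $\|v_n-Av_{n-k}\|\to 0$ from Theorem~1, and on $E_1$ use the monotonicity $\langle v_n,w^{(i)}\rangle=\langle B_nAv_{n-k},w^{(i)}\rangle\le\langle Av_{n-k},w^{(i)}\rangle=\langle v_{n-k},w^{(i)}\rangle$ (valid because $B_n$ is diagonal with entries in $[0,1]$, $Av_{n-k}\ge 0$, $w^{(i)}\ge 0$, and $A$ is symmetric with $Aw^{(i)}=w^{(i)}$). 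This is correct and has the advantage of treating all sub-cases of Case II uniformly, including $A=I$ where the paper's Fact~1 route breaks down and a separate hands-on argument is needed. The paper's approach, on the other hand, avoids having to produce a nonnegative orthonormal basis of $E_1$ and reuses the machinery already built in Theorems~1 and~3 more directly.
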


\begin{proof}
To begin we rewrite our system using matrix notation, as was done in Section 4. We let
$$v_{n}=\left(\begin{array}{cc}
x_{n}\\               
y_{n}\\ 
\end{array}
\right),\quad A=\left(\begin{array}{cc}
\beta_{k} & \gamma_{k} \\
\delta_{k} & \epsilon_{k} \\
\end{array}\right),$$  and $$B_{n}=\left(\begin{array}{cc}
\frac{1}{1+\sum_{j=1}^{\ell}a_{j}\cdot v_{n-j}} & 0 \\
0 & \frac{1}{1+\sum_{j=1}^{\ell}q_{j}\cdot v_{n-j}} \\
\end{array}\right),$$
where 
$$a_{j}=\left(\begin{array}{cc}
B_{j}\\               
C_{j}\\ 
\end{array}
\right)\quad and\quad q_{j}=\left(\begin{array}{cc}
D_{j}\\               
E_{j}\\ 
\end{array}
\right).$$
Our system then becomes
$$v_{n}= B_{n}Av_{n-k},\quad n\in\mathbb{N}.$$
Now case (I) follows directly from Theorem 1. Also case (IV) follows directly from Theorem 3. 
Recall that the solutions for the eigenvalues of a $2\times 2$ matrix $A$ can be written as
$$\lambda = \frac{1}{2}\left( tr(A)\pm\sqrt{tr^{2}(A)-4det(A)} \right).$$
This computation is fairly straightforward; it appears as an exercise on page 39 in \cite{matrixanal}.
With our definition of $A$ this becomes
$$\lambda = \frac{1}{2}\left( \beta_{k}+\epsilon_{k}\pm\sqrt{(\beta_{k}-\epsilon_{k})^{2} + 4\gamma_{k}\delta_{k}} \right).$$
Now suppose $\delta_{k},\gamma_{k}>0$ and consider the change of variables $\hat{x}_{n}=\left(\sqrt{\frac{\gamma_{k}}{\delta_{k}}}\right)x_{n}$. 
Under this change of variables we get $\hat{\delta}_{k}=\sqrt{\delta_{k}\gamma_{k}}=\hat{\gamma}_{k}$. Notice that our new matrix $\hat{A}$ is symmetric and has the same eigenvalues as $A$.
Thus, in the case where $A$ is a positive matrix, Theorem 4 applies and gives the result. Also, in the case where $\delta_{k},\gamma_{k}>0$, Remark 1 applies and resolves case (II).
Now we will prove case (III). Suppose $\beta_{k}+\epsilon_{k}>0$ and $-1$ is an eigenvalue. Then we must have 
$$\frac{1}{2}\left( \beta_{k}+\epsilon_{k}+\sqrt{(\beta_{k}-\epsilon_{k})^{2} + 4\gamma_{k}\delta_{k}} \right)>1.$$
However since we have assumed that the spectral radius is $1$ in this case that is impossible. Thus $\beta_{k}+\epsilon_{k}\leq 0$ and we know from assumption that $\beta_{k}+\epsilon_{k}\geq 0$. Thus
$\beta_{k}+\epsilon_{k}= 0$ and in case (III) both $-1$ and $1$ are eigenvalues. So in case (III) we have 
$$A=\left(\begin{array}{cc}
0 & \gamma_{k}\\              
\frac{1}{\gamma_{k}} & 0\\ 
\end{array}
\right). $$
So in case (III) we have the following system of rational difference equations
$$x_{n}=\frac{\gamma_{k}y_{n-k}}
{1+\sum_{j=1}^{\ell}B_{j}x_{n-j} + \sum_{j=1}^{\ell}C_{j}y_{n-j}},\quad n\in\mathbb{N},$$
$$y_{n}=\frac{x_{n-k}}
{\gamma_{k}(1+\sum_{j=1}^{\ell}D_{j}x_{n-j} + \sum_{j=1}^{\ell}E_{j}y_{n-j})},\quad n\in\mathbb{N}.$$
Thus, we have the following recursive inequalities
$$x_{n}\leq x_{n-2k},$$
$$y_{n}\leq y_{n-2k}.$$
So the subsequences $\{y_{n2k+a}\}^{\infty}_{n=1}$ and $\{x_{n2k+a}\}^{\infty}_{n=1}$ are all monotone decreasing and bounded below by zero, so they all converge. Thus we have shown that in case (III) every solution converges to a periodic solution of not necessarily prime period $2k$.
Since in case (III) we have 
$$A=\left(\begin{array}{cc}
0 & \gamma_{k}\\              
\frac{1}{\gamma_{k}} & 0\\ 
\end{array}
\right), $$
 choose initial conditions so that for $n>1$ if $n\not\equiv 0 \mod{k}$ then $v_{n}=0$ and if $n\equiv 0 \mod{2k}$ then $$v_{n}=\left(\begin{array}{cc}
a\\              
b\\ 
\end{array}
\right),$$
where $a,b\in [0,\infty)$ and $a\neq \gamma_{k} b$ and if $n\equiv k \mod{2k}$ then $$v_{n}=\left(\begin{array}{cc}
\gamma_{k}b\\              
\frac{a}{\gamma_{k}}\\ 
\end{array}
\right).$$ Then the solution given by these initial conditions is a periodic solution of prime period $2k$. 
This concludes the proof of case (III).\newline
Thus all we must show to finish case (II) is that when the spectral radius is $1$ and either $\delta_{k}=0$ or $\gamma_{k}=0$ or both, then every solution converges to a periodic solution of prime period $k$. 

Assume that we have $\delta_{k}=\gamma_{k}=0$ in case (II). 
Then we have for $0<\lambda<1$,
$A=\left(\begin{array}{cc}
1 & 0 \\
0 & \lambda  \\
\end{array}\right)$
or
$A=\left(\begin{array}{cc}
\lambda & 0 \\
0 & 1 \\
\end{array}\right).$
Let us focus on the recursive equations for $x_{n}$ and $y_{n}$, we get that
$$x_{n}=\frac{x_{n-k}}
{1+\sum_{j=1}^{\ell}B_{j}x_{n-j} + \sum_{j=1}^{\ell}C_{j}y_{n-j}},\quad n\in\mathbb{N},$$
$$y_{n}=\frac{\lambda y_{n-k}}
{1+\sum_{j=1}^{\ell}D_{j}x_{n-j} + \sum_{j=1}^{\ell}E_{j}y_{n-j}},\quad n\in\mathbb{N}.$$
So we obtain the following recursive inequalities
$$x_{n}\leq x_{n-k},\quad n\in\mathbb{N},$$
$$y_{n}\leq \lambda y_{n-k},\quad n\in\mathbb{N}.$$
So the subsequences $\{x_{nk+a}\}^{\infty}_{n=1}$ and $\{y_{nk+a}\}^{\infty}_{n=1}$ are all monotone decreasing and bounded below by zero, so they all converge and clearly $y_{n}\ra 0$.\newline
Or we have $$x_{n}=\frac{\lambda x_{n-k}}
{1+\sum_{j=1}^{\ell}B_{j}x_{n-j} + \sum_{j=1}^{\ell}C_{j}y_{n-j}},\quad n\in\mathbb{N},$$
$$y_{n}=\frac{ y_{n-k}}
{1+\sum_{j=1}^{\ell}D_{j}x_{n-j} + \sum_{j=1}^{\ell}E_{j}y_{n-j}},\quad n\in\mathbb{N}.$$
So we obtain the following recursive inequalities
$$x_{n}\leq \lambda x_{n-k},\quad n\in\mathbb{N},$$
$$y_{n}\leq  y_{n-k},\quad n\in\mathbb{N}.$$
So the subsequences $\{x_{nk+a}\}^{\infty}_{n=1}$ and $\{y_{nk+a}\}^{\infty}_{n=1}$ are all monotone decreasing and bounded below by zero, so they all converge and clearly $x_{n}\ra 0$.
To construct a periodic solution take the initial conditions so that for $n>1$ if $n\not\equiv 0 \mod{k}$ then $v_{n}=0$ and if $n\equiv 0 \mod{k}$ then $$v_{n}=\left(\begin{array}{cc}
1\\              
0\\ 
\end{array}
\right)\quad or \quad v_{n}=\left(\begin{array}{cc}
0\\              
1\\ 
\end{array}
\right),$$ depending on the case. This is a periodic solution of prime period $k$.\newline
Thus all we must show to finish case (II) is that when the spectral radius is $1$ and either $\delta_{k}=0$ or $\gamma_{k}=0$ but not both, then every solution converges to a periodic solution of prime period $k$. We may assume without loss of generality that $\delta_{k}=0$. If not then make the change of variables $x_{n}=y_{n}$ and vice versa.
Keeping in mind this change of variables we may assume without loss of generality that the only case left to be shown is case (II) when $\delta_{k}=0$ and $\gamma_{k}>0$.
We can now do a further change of variables $\hat{y}_{n}=\frac{y_{n}}{\gamma_{k}}$. Keeping in mind this change of variables we may assume without loss of generality that the only case left to be shown is case (II) when $\delta_{k}=0$ and $\gamma_{k}=1$.
Notice from the eigenvalue calculation earlier that in this case our eigenvalues are $\lambda_{1}=\beta_{k}$ and $\lambda_{2}=\epsilon_{k}$. The spectral radius is $1$ so either $\beta_{k}=1$ or $\epsilon_{k}=1$.
Notice that both $\beta_{k}$ and $\epsilon_{k}$ cannot equal $1$, otherwise we fall into case (IV). This leaves us with $2$ cases. The case where $\beta_{k}=1$ and the case where $\epsilon_{k}=1$.
Let us first consider the case where $\beta_{k}=1$. Focusing on the recursive equations for $x_{n}$ and $y_{n}$ we get that
$$x_{n}=\frac{x_{n-k}+y_{n-k}}
{1+\sum_{j=1}^{\ell}B_{j}x_{n-j} + \sum_{j=1}^{\ell}C_{j}y_{n-j}},\quad n\in\mathbb{N},$$
$$y_{n}=\frac{\epsilon_{k} y_{n-k}}
{1+\sum_{j=1}^{\ell}D_{j}x_{n-j} + \sum_{j=1}^{\ell}E_{j}y_{n-j}},\quad n\in\mathbb{N},$$
where $0\leq \epsilon_{k} < 1$. Now consider the function $h(x,y)=|x|+\left(\frac{1}{1-\epsilon_{k}}\right)|y|$. Then we have 
$$h(x_{n},y_{n})\leq x_{n-k}+y_{n-k}+\frac{\epsilon_{k}y_{n-k}}{1-\epsilon_{k}}= x_{n-k}+ \frac{y_{n-k}}{1-\epsilon_{k}}=h(x_{n-k},y_{n-k}).$$
Notice that since $0\leq \epsilon_{k} < 1$, and $y_{n}\leq \epsilon y_{n-k}$ we have that $y_{n}\ra 0$. Also since $h(x_{n},y_{n})\leq h(x_{n-k},y_{n-k})$ we get that both $x_{n}$ and $y_{n}$ are bounded. Moreover the sequences $\left\{h(x_{nk+a},y_{nk+a})\right\}^{\infty}_{n=1}$ are monotone decreasing and bounded below by zero hence convergent.
So we have $\lim_{n\ra\infty} h(x_{nk+a},y_{nk+a})=\mathfrak{L}_{a}$. 
Now consider the sequence $\{v_{nk+a}\}^{\infty}_{n=1}$ and let $\{v_{n_{j}k+a}\}^{\infty}_{j=1}$ be a convergent subsequence with $\lim_{j\ra\infty}v_{n_{j}k+a}=w_a$. By what we have just shown it must be true that $w_{a}=\left(\begin{array}{cc}
u_{a}\\              
0\\ 
\end{array}
\right)$ for some $u_{a}\geq 0$ and $h(w_{a})=\mathfrak{L}_{a}$. 
This forces $w_{a}=\left(\begin{array}{cc}
\mathfrak{L}_{a}\\              
0\\ 
\end{array}
\right).$ 
What this means is that the sequence $\{v_{nk+a}\}^{\infty}_{n=1}$
must converge to $w_{a}=\left(\begin{array}{cc}
\mathfrak{L}_{a}\\              
0\\ 
\end{array}
\right).$ Suppose it does not, then for some $\epsilon >0$ there is a subsequence $\{v_{n_{d}k+a}\}^{\infty}_{d=1}$ so that 
$$\left|\left|v_{n_{d}k+a} - w_{a}\right|\right|>\epsilon$$ for all $d\in\mathbb{N}$. However we know that $\{v_{n_{d}k+a}\}^{\infty}_{d=1}$ is bounded and so it has a convergent subsequence. This means that 
$\{v_{nk+a}\}^{\infty}_{n=1}$ has a convergent subsequence which does not converge to $\left(\begin{array}{cc}
\mathfrak{L}_{a}\\              
0\\ 
\end{array}
\right).$ We have already shown that every convergent subsequence of $\{v_{nk+a}\}^{\infty}_{n=1}$ converges to
$\left(\begin{array}{cc}
\mathfrak{L}_{a}\\              
0\\ 
\end{array}
\right).$ Thus we have a contradiction. This proves that the sequence $\{v_{nk+a}\}^{\infty}_{n=1}$
must converge to $\left(\begin{array}{cc}
\mathfrak{L}_{a}\\              
0\\ 
\end{array}
\right).$\newline

Thus every solution must converge to a periodic solution of not necessarily prime period $k$. To construct a solution which is periodic with prime period $k$ we choose initial conditions so that for $n>1$ if $n\not\equiv 0 \mod{k}$ then $v_{n}=0$ and if $n\equiv 0 \mod{k}$ then $v_{n}=\left(\begin{array}{cc}
1\\              
0\\ 
\end{array}
\right).$ This is a periodic solution of prime period $k$. This concludes our proof of the case where $\delta_{k}=0$, $\gamma_{k}>0$, and $\beta_{k}=1$.\newline
All that remains is the case where $\delta_{k}=0$, $\gamma_{k}=1$, $\epsilon_{k}=1$, and $0\leq\beta_{k}<1$.
Focusing on the recursive equations for $x_{n}$ and $y_{n}$ we get that
$$x_{n}=\frac{\beta_{k}x_{n-k}+y_{n-k}}
{1+\sum_{j=1}^{\ell}B_{j}x_{n-j} + \sum_{j=1}^{\ell}C_{j}y_{n-j}},\quad n\in\mathbb{N},$$
$$y_{n}=\frac{ y_{n-k}}
{1+\sum_{j=1}^{\ell}D_{j}x_{n-j} + \sum_{j=1}^{\ell}E_{j}y_{n-j}},\quad n\in\mathbb{N},$$
where $0\leq \beta_{k} < 1$. Notice first that since $y_{n}\leq y_{n-k}$, the subsequences $\{y_{nk+a}\}^{\infty}_{n=1}$ with $a\in \{0,\dots ,k-1\}$ are all monotone decreasing and bounded below by $0$, hence they all converge.
Let $\lim_{n\ra\infty}y_{nk+a}=L_{a}$.
Now let $S_{a}$ be the limit superior of the subsequence $\{x_{nk+a}\}^{\infty}_{n=1}$ with $a\in \{0,\dots ,k-1\}$ and let $I_{a}$ be the limit inferior of the subsequence $\{x_{nk+a}\}^{\infty}_{n=1}$ with $a\in \{0,\dots ,k-1\}$. 
This gives us the following
$$S_{a}\leq \frac{\beta_{k}S_{a}+L_{a}}{1+\sum_{j=1}^{\ell}B_{j}I_{(a-j)\mod{k}} + \sum_{j=1}^{\ell}C_{j}L_{(a-j)\mod{k}}},$$
$$I_{a}\geq \frac{\beta_{k}I_{a}+L_{a}}{1+\sum_{j=1}^{\ell}B_{j}S_{(a-j)\mod{k}} + \sum_{j=1}^{\ell}C_{j}L_{(a-j)\mod{k}}}.$$
Thus we have $$-L_{a}\leq S_{a}\left(\beta_{k}-1- \sum_{j=1}^{\ell}C_{j}L_{(a-j)\mod{k}} \right) - \sum_{j=1}^{\ell}B_{j}S_{a}I_{(a-j)\mod{k}},$$ 
$$-L_{a}\geq I_{a}\left(\beta_{k}-1- \sum_{j=1}^{\ell}C_{j}L_{(a-j)\mod{k}} \right) - \sum_{j=1}^{\ell}B_{j}I_{a}S_{(a-j)\mod{k}}.$$
This gives us 
$$0\leq \left(S_{a}-I_{a}\right)\left(\beta_{k}-1- \sum_{j=1}^{\ell}C_{j}L_{(a-j)\mod{k}} \right) + \sum_{j=1}^{\ell}B_{j}\left(I_{a}S_{(a-j)\mod{k}} -S_{a}I_{(a-j)\mod{k}}\right),$$ 
for all $a\in \{0,\dots ,k-1\}$. 
Now notice that $S_{a}\leq \beta_{k}S_{a}+L_{a}$ thus $S_{a}\leq \frac{L_{a}}{1-\beta_{k}}$ for all $a\in \{0,\dots ,k-1\}$. Thus $$I_{a}\geq \frac{L_{a}}{1+\sum_{j=1}^{\ell}\frac{B_{j}}{1-\beta_{k}}L_{(a-j)\mod{k}} + \sum_{j=1}^{\ell}C_{j}L_{(a-j)\mod{k}}}$$ for all $a\in \{0,\dots ,k-1\}$.
Thus if $I_{a}=0$ then $L_{a}=0$ so $S_{a}=0$. So $I_{a}=0$ if and only if $S_{a}=0$. We claim that $S_{a}=I_{a}$ for all $a\in \{0,\dots ,k-1\}$. Assume for the sake of contradiction that this is not the case, then for at least one $a\in \{0,\dots ,k-1\}$, we have $S_{a}>I_{a}>0$. Let $G=\{a\in \{0,\dots ,k-1\}|S_{a}>I_{a}>0\}\neq \emptyset$. 
Consider the element $b\in G$ so that $\frac{S_{b}}{I_{b}}\geq \frac{S_{a}}{I_{a}}$ for all $a\in G$. Such an element must exist since $G$ is finite.
We claim $\left(I_{b}S_{(b-j)\mod{k}} -S_{b}I_{(b-j)\mod{k}}\right)\leq 0$ for all $j\in\mathbb{N}$. Indeed, if $(b-j)\mod{k}\not\in G$ then $S_{(b-j)\mod{k}}=I_{(b-j)\mod{k}}$, so
$$\left(I_{b}S_{(b-j)\mod{k}} -S_{b}I_{(b-j)\mod{k}}\right)= S_{(b-j)\mod{k}}\left(I_{b}-S_{b}\right)\leq 0.$$
Moreover, if $(b-j)\mod{k}\in G$ then $$\frac{S_{b}}{I_{b}}\geq \frac{S_{(b-j)\mod{k}}}{I_{(b-j)\mod{k}}}.$$
Thus $$S_{b}I_{(b-j)\mod{k}}\geq I_{b}S_{(b-j)\mod{k}}.$$
So $$\left(I_{b}S_{(b-j)\mod{k}} -S_{b}I_{(b-j)\mod{k}}\right)\leq 0.$$
Now using the earlier inequality with $b$ we get
$$0\leq \left(S_{b}-I_{b}\right)\left(\beta_{k}-1- \sum_{j=1}^{\ell}C_{j}L_{(a-j)\mod{k}} \right) + \sum_{j=1}^{\ell}B_{j}\left(I_{b}S_{(b-j)\mod{k}} -S_{b}I_{(b-j)\mod{k}}\right)$$
$$\leq \left(S_{b}-I_{b}\right)\left(\beta_{k}-1- \sum_{j=1}^{\ell}C_{j}L_{(a-j)\mod{k}} \right).$$
This forces $S_{b}=I_{b}$, but we chose $b\in G$. This is a contradiction. This establishes the claim $S_{a}=I_{a}$ for all $a\in \{0,\dots ,k-1\}$. Thus all of the subsequences $\{x_{nk+a}\}^{\infty}_{n=1}$ with $a\in \{0,\dots ,k-1\}$ converge. Thus, every solution converges to a periodic solution of not necessarily prime period $k$.
To construct a solution which is periodic with prime period $k$ we choose initial conditions so that for $n>1$ if $n\not\equiv 0 \mod{k}$ then $v_{n}=0$ and if $n\equiv 0 \mod{k}$ then $v_{n}=\left(\begin{array}{cc}
\frac{1}{1-\beta_{k}}\\              
1\\ 
\end{array}
\right).$ This is a periodic solution of prime period $k$. This concludes our proof of case (II) and the theorem is proved.

\end{proof}

\section{Conclusion}
We have created some analogues for trichotomy behavior for systems of rational difference equations, but we have barely scratched the surface. There are literally thousands of special cases of systems of rational difference equations of order greater than one to explore.
This paper leaves several questions for further study. Are there any other examples of periodic tetrachotomy behavior for systems of two rational difference equations? Is it possible to make analogues to other trichotomy results in the literature?
The main idea to take away from this article is that in some cases it is useful to reframe a problem about systems of rational difference equations as a problem about recursive systems on vector spaces. Doing this allows one to utilize the powerful tools available in linear algebra.
Further work should focus on proving a similar result in systems of three rational difference equations. Note that the results in \cite{f2} and \cite{f3} susbsume and unify a number of prior results. For example the case presented in section 2 is a minor generalization of a case originally presented in \cite{kst}. We list, for the readers convienience, the references \cite{aggl},\cite{book}, and [6-22] as these references provide good background on trichotomy character for rational difference equations. 
\vfill
\pagebreak
\par\vspace{0.5 cm}

\end{document}